\documentclass[reqno,oneside]{amsart}

\usepackage{algorithm}
\usepackage{algorithmic}
\usepackage{amsfonts}
\usepackage{amsmath}
\usepackage{amssymb}
\usepackage{amsthm}
\usepackage{bm}
\usepackage{cite}
\usepackage{geometry}
\usepackage{graphicx}
\usepackage{hyperref}

\newcommand{\cA}{\mathcal A}
\newcommand{\f}{\frac}

\newcommand{\p}{\partial}

\newcommand{\be}{\begin{equation}}
\newcommand{\ee}{\end{equation}}
\newcommand{\ba}{\begin{array}}
\newcommand{\ea}{\end{array}}

\theoremstyle{plain}
\newtheorem{theorem}{Theorem}[section]
\newtheorem{lemma}{Lemma}[section]
\newtheorem{problem}{Problem}[section]

\title[Identifying the source term in a viscoelastic membrane]
{Identifying the source term in a viscoelastic membrane with a Riemann-Liouville time derivative by the partial interior observation}

\author[Z. Yang]{Zhiwei Yang$^{1}$}
\thanks{$^1$School of Qilu Transportation and State Key Laboratory of Intelligent Manufacturing of Advanced Construction Machinery, Shandong University, Jinan, Shandong 250002, P.R. China. E-mail: {\tt zhiweiyang@sdu.edu.cn}
}

\author[Y. Liu]{Yikan Liu$^{2,*}$}
\thanks{$^2$ Department of Mathematics, Kyoto University, Kitashirakawa-Oiwakecho, Sakyo-ku, Kyoto 606-8502, Japan.}
\thanks{$^*$Corresponding author. E-mail: {\tt liu.yikan.8z@kyoto-u.ac.jp}}
\subjclass{35R11, 65M60.}
\keywords{Inverse source problem, viscoelastic membrane, optimal control approach, Riemann-Liouville derivative.}

\begin{document}

\begin{abstract}
This paper studies an inverse source problem for a viscoelastic membrane, where the material's memory effect is characterized by the Riemann-Liouville fractional derivative. The problem is to recover the unknown source term from the limited interior observation data. We propose an optimal control framework to address this ill-posed inverse problem. The first-order optimality condition leads to a coupled system of forward and backward fractional partial differential equations. A numerical algorithm combining the finite element method and a conjugate gradient iterative scheme is then developed for the reconstruction of the source term. Several numerical examples are provided to demonstrate the effectiveness and robustness of the proposed method.
\end{abstract}

\maketitle


\section{Introduction}

The vibration analysis of beams and membranes has garnered significant attention due to its wide range of applications in various engineering fields \cite{AnsEsm,BookInman,BookReddy2003,BookTimo,BookWinkler,Mei}. Understanding the dynamic behavior of these structural components is essential for designing systems with optimal performance and durability. This is particularly true for viscoelastic materials, which exhibit a combination of elastic (energy storage) and viscous (energy dissipation) behavior, making them crucial in applications where dynamic response and damping are important \cite{Kerr,FilBor,Chr,Fer,Gem,PraPlate20,Vla}. The study of viscoelastic membrane vibrations is therefore highly relevant to numerous disciplines, including aerospace, automotive, and civil engineering.

Classical models often rely on integer-order differential equations to describe material behavior. However, recent advances have demonstrated the superiority of fractional-order derivatives in capturing the complex, history-dependent dynamics inherent to viscoelastic materials \cite{Bag,BagTor,BonKap,JaiMcKin,Mai,ZheWanCNS}. The fractional calculus has consequently gained popularity as an effective tool in diverse engineering domains. Its ability to accurately describe long-term memory effects and anomalous diffusion processes is especially valuable for predicting long-term durability and stability \cite{LiWan,Pip,Pao,Rei,Sun,SunGao,SunZha,VarKha}. By incorporating fractional-order derivatives into constitutive models, engineers can achieve more precise simulations of viscoelastic materials, leading to improved design strategies and enhanced structural performance \cite{MaiSpa,WeiChe,YinLi,YouMaj,YouHos}.

The analysis of structures involving viscoelastic materials presents considerable challenges, primarily due to their complex constitutive relationships and the resulting intricate dynamic response \cite{PerKar,PraPlate22,PraBeam,Ross}. In recent years, significant research efforts have been directed towards parameter identification and inverse problems for damped structural systems, such as beams and plates \cite{HasIPplate,HasRom,HasTrans,HasIPbeam}. Building upon these foundations, this paper focuses on a specific inverse source problem related to the vibrational behavior of viscoelastic membranes. We aim to elucidate the intricate interplay between elasticity and viscosity by investigating the following problem: recovering the spatial load component \( q(\bm x) \) in the viscoelastic membrane equation

Bearing the above background in mind, we formulate the problem under consideration in this article as follows. Let $T>0$ be a constant and $\Omega\subset\mathbb R^d$ ($d=1,2,\dots$) be a bounded domain with a smooth boundary $\p\Omega$. Define an elliptic operator $\cA:D(\cA)\longrightarrow L^2(\Omega)$ as
\begin{align*}
\cA\varphi(\bm x) & := -\mathrm{div}(\bm A(\bm x)\nabla\varphi(\bm x))+\bm b(\bm x)\cdot\nabla\phi(\bm x)+c(\bm x)\varphi(\bm x),\\
D(\cA) & := \{\varphi\in H^2(\Omega)\mid\p_{\bm A}\varphi+\sigma\varphi=0\mbox{ on }\p\Omega\}.
\end{align*}
where $\cdot$ and $\nabla=(\f\p{\p x_1},\dots,\f\p{\p x_d})$ denote the inner product in $\mathbb R^d$ and the gradient in $\bm x$, respectively. Here $\bm A\in C^1(\overline\Omega;\mathbb R^{d\times d})$ is a strictly positive-definite matrix-valued function on $\overline\Omega$ satisfying
\[
\bm A(\bm x)\bm\xi\cdot\bm\xi\ge\kappa|\bm\xi|^2=\kappa\,\bm\xi\cdot\bm\xi,\quad\forall\,\bm x\in\overline\Omega,\ \forall\,\bm\xi\in\mathbb R^d
\]
for some constant $\kappa>0$. Further, we assume $\bm b\in C^1(\overline\Omega;\mathbb R^d)$, $c\in C(\overline\Omega)$, $0\le\sigma\in C(\p\Omega)$, and
\[
\p_{\bm A}\varphi(\bm x):=\bm A(\bm x)\nabla\varphi(\bm x)\cdot\bm\nu(\bm x)
\]
stands for the normal derivative of $\varphi$ on $\p\Omega$ associated with $\bm A$, where $\bm\nu(\bm x)$ is the unit outward normal vector to $\p\Omega$ at $\bm x\in\p\Omega$.

Next, given a constant $\beta>0$, we introduce the Riemann-Liouville integral operator of order $\beta$ in time as
\[
J_{0+}^\beta f(t):=\int_0^t\f{(t-s)^{\beta-1}}{\Gamma(\beta)}f(s)\,\mathrm ds,\quad f\in C[0,\infty),
\]
where $\Gamma(\,\cdot\,)$ denotes the Gamma function. Then for a constant $\alpha\in(1,2)$, the Riemann-Liouville derivative of order $\alpha$ is defined by (see e.g. \cite{Die,MeeSik,Pod,SamKil})
\begin{equation}\label{def:frac}
D_{0+}^\alpha:=\f{\mathrm d^2}{\mathrm dt^2}\circ J_{0+}^{2-\alpha}:C^2[0,\infty)\longrightarrow C[0,\infty),
\end{equation}
where $\circ$ stands for the composite. Now we are well prepared to formulate the initial-boundary value problem for a nonlocal viscoelastic equation as
\begin{equation}\label{main:e0}
\begin{cases}
(\eta(\bm x)\partial_{t}^2 + \mu(\bm x)D_{0+}^{\alpha}u + \cA) u(\bm x,t) = F(\bm x,t) := p(t)q(\bm x), & (\bm x,t) \in \Omega \times (0,T),\\
u(\bm x,0) = u_0(\bm x), \ \partial_{t}u(\bm x,0) = u_1(\bm x), & \bm x \in \Omega,\\
\p_{\bm A} u(\bm x,t)+ \sigma u(\bm x,t) = 0, & (\bm x,t) \in \partial \Omega \times (0,T),
\end{cases}
\end{equation}
where $\eta,\mu\in C(\overline\Omega)$ are strictly positive on $\overline\Omega$, and $u_0\in H^1(\Omega),u_1\in L^2(\Omega)$ denote initial displacement and initial velocity, respectively. In particular, the external load $F$ is assumed to take the form of separated variables, where \( p(t) \) is the known temporal component and \( q(\bm x) \) is the unknown spatial component to be identified. The boundary condition covers both homogeneous Neumann and Robin cases, depending on whether $\sigma$ vanishes identically on $\p\Omega$ or not. The discussion throughout this article also works for the homogeneous Dirichlet boundary condition.

This manuscript is concerned with the following inverse problem for \eqref{main:e0}.

\begin{problem}\label{prob-ISP}
Let $\Omega_0$ be a nonempty subdomain of $\Omega$ and $u$ be the solution to \eqref{main:e0}. Determine the spatial component $q(\bm x)$ in the source term of \eqref{main:e0} by the partial interior observation of $u$ in $\Omega_0\times(0,T)$.
\end{problem}

To address this inverse problem, we reformulate it as a minimization problem using a least-squares cost functional. Within an optimal control framework, we derive a coupled forward-backward system of differential equations from the first-order optimality conditions. Subsequently, we develop a numerical algorithm based on the conjugate gradient method, combined with a linear-type finite element discretization, for stable and efficient reconstruction of the source term \( q(\bm x) \).

The remainder of this paper is organized as follows. Section \ref{sect:model} mentions the physical backgrounds of the governing equations from a membrane vibration problem and a fractional spring-dashpot model. Then the development of a fully discrete finite element scheme for the direct problem is presented in Sections \ref{sect:num}. The optimization approach to solving the inverse problem is detailed in Section \ref{sect:inverse}. Several numerical experiments are provided in Section \ref{sect:ex} to validate the numerical analysis and demonstrate the effectiveness of the proposed inversion algorithm. Finally, discussions and concluding remarks are given in the last section.


\section{Preliminaries}


\subsection{Model formulation}\label{sect:model}

This subsection briefly explains the motivation of studying the governing equation in \eqref{main:e0} from practical applications.

First we consider a membrane with the mid-plane domain \(\Omega \subset\mathbb R^2\) in its undeformed state. Let \(\rho(\bm x)\) be the mass density, \(h\) the thickness, and \(u(\bm x,t)\) the transverse displacement. The membrane is subjected to a transverse load decomposed as \(F(\bm x,t)\). In the classical Winkler foundation model, the foundation is modeled as a set of linearly elastic springs, yielding a restoring force proportional to the displacement:
\begin{equation}\label{force:pasnak}
F_r(\bm x,t) = \mu(\bm x) u(\bm x,t),
\end{equation}
where \(\mu(\bm x)\) is the stiffness coefficient. Then the governing equation for the membrane on such a foundation is given by (see \cite{BookRao,BookReddy2006})
\[
\rho(\bm x) h \, \partial_{t}^2 u - \mathrm{div}(C(\bm x) \nabla u) + \mu(\bm x) u = F(\bm x,t)\quad\mbox{in }\Omega \times (0,T),
\]
where \(C(\bm x)\) denotes the tension in the plane. The system is closed with appropriately specified initial and boundary conditions such as those in \eqref{main:e0}.

However, real-world foundation materials often exhibit viscoelastic behavior that involves memory-dependent damping. To accurately capture this behavior, we adopt the Riemann-Liouville fractional derivative, as introduced in \eqref{def:frac}, to model the reaction force of the foundation as
\begin{equation}\label{gWinklerForce}
F_r(\bm x,t) = \mu(\bm x)D_{0+}^{\alpha} u, \quad \alpha \in (1,2),
\end{equation}
where \(\mu(\bm x)\) now stands for the viscoelastic coefficient. Replacing the elastic force \eqref{force:pasnak} with the viscoelastic model \eqref{gWinklerForce}, we obtain the governing equation for a membrane on a fractional viscoelastic foundation
\begin{equation}\label{frac:PlateEq}
\rho(\bm x) h \, \partial_{t}^2 u + \mu(\bm x)D_{0+}^{\alpha} u - \mathrm{div}(C(\bm x) \nabla u) = F(\bm x,t)\quad\mbox{in }\Omega \times (0,T).
\end{equation}

On the other hand, the following double-term time-fractional wave equation was derived from a one-dimensional fractional extended Maxwell model in \cite{KLLNY} very recently:
\begin{equation}\label{eq-Maxwell}
\rho(x)\p_t^2u+\rho(x)\lambda(x)D_{0+}^\alpha u-\p_x(C(x)\p_x u)=F(x,t)\quad\mbox{in }\Omega\times(0,T),
\end{equation}
where takes rather a similar form as \eqref{gWinklerForce}. As a reasonable and natural generalization of \eqref{gWinklerForce} and \eqref{eq-Maxwell}, in this paper we fix \eqref{main:e0} as the model problem for investigations.


\subsection{Numerical discretization of the forward problem}\label{sect:num}

The accurate resolution of the forward problem is paramount for the subsequent solution of the inverse problem. The principal challenge in numerically integrating equation \eqref{frac:PlateEq} stems from the nonlocal character of the Riemann-Liouville fractional derivative, which necessitates a discretization scheme that faithfully captures the inherent memory effects. This section delineates the development of a robust and high-fidelity numerical scheme, integrating a temporally stable finite-difference discretization with a spatially flexible finite element method.

We start with discretizing the temporal interval $[0, T]$, which is partitioned into $N$ uniform segments, defining a sequence of time steps:
\begin{equation}\label{Temp:e0}
t_n = n \tau, \quad \text{for } n = 0, 1, \ldots, N, \quad \tau = T/N.
\end{equation}
At a discrete time $t_n$, the governing equation is expressed as
\begin{equation}\label{PlateEqL}
\eta(\bm x)\p_t^2 u(\bm x,t_n) + \mu(\bm x)D_{0+}^\alpha u(\bm x,t_n) + \cA u(\bm x,t_n) = F(\bm x,t_n).
\end{equation}
The integer-order time derivatives are approximated using finite differences renowned for their numerical stability in dynamic problems. The first and second derivatives are discretized as follows:
\begin{align}
\partial_t u(\bm x,t_n) & \approx \delta_{\tau} u(\bm x,t_n) := \frac{u(\bm x,t_n) - u(\bm x,t_{n-1})}{\tau}, \label{Temp:e11} \\
\partial_t^2 u(\bm x,t_n) & \approx \delta_{\tau}^2 u(\bm x,t_n) := \frac{u(\bm x,t_n) - 2u(\bm x,t_{n-1}) + u(\bm x,t_{n-2})}{\tau^2}. \label{Temp:e2}
\end{align}

Next, the accurate discretization of the fractional term $D_t^\alpha u$ is critical. We employ the Gr\"unwald-Letnikov formula, which provides a consistent approximation that converges to the Riemann-Liouville derivative. The discrete form reads
\begin{equation}\label{disc:frac1}
D_{0+}^\alpha u(\bm x,t_n) \approx \delta^{\alpha}_{\tau} u(\bm x,t_n) := \tau^{-\alpha} \sum_{k=0}^{n} \omega_k^{(\alpha)} u(\bm x,t_{n-k}),
\end{equation}
where the coefficients $\omega_k^{(\alpha)}$ are generated recursively to efficiently account for the entire history of the solution:
\[
\omega_0^{(\alpha)} = 1, \quad \omega_k^{(\alpha)} = \left(1 - \frac{\alpha+1}{k}\right) \omega_{k-1}^{(\alpha)}, \quad k \geq 1.
\]
This scheme inherently embodies the hereditary nature of viscoelasticity, making it physically congruent with our model.

On the spatial direction, to accommodate the complex geometry of the membrane domain $\Omega$, we derive a variational formulation. Let $\chi \in H^1(\Omega)$ be a test function. Multiplying both sides of \eqref{PlateEqL} by $\chi$ integrating over $\Omega$, we apply the divergence theorem and the boundary condition to obtain the weak form:
\[
\int_\Omega \left(\eta \, \partial_t^2 u + \mu \, D_{0+}^\alpha u\right) \chi \,\mathrm d\bm x + A[u, \chi] + \int_{\p\Omega}\sigma u\chi\,\mathrm d\bm S= \int_\Omega F \, \chi \,\mathrm d\bm x,
\]
where the bilinear form $A[\,\cdot\,, \,\cdot\,]$, representing the internal elastic energy, is defined as
\[
A[\varphi, \chi] := \int_\Omega\{\bm A\nabla\varphi \cdot \nabla \chi+(\bm b\cdot\nabla\varphi+c\,\varphi)\}\chi\,\mathrm d\bm x.
\]
Substituting the temporal approximations \eqref{Temp:e11}--\eqref{disc:frac1} into the weak form leads to the semi-discrete problem. For the spatial discretization, the domain $\Omega$ is triangulated into a quasi-uniform mesh. We construct a finite-dimensional subspace $S_h \subset H^1_0(\Omega)$ comprising continuous piecewise linear polynomials. The fully discrete problem is then formulated as follows: for $n \geq 2$, find $u_h^n \in S_h$ such that $\forall \chi_h \in S_h$,
\[
\int_\Omega \left( \eta \, \delta_{\tau}^2 u_h^n + \mu \, \delta_{\tau}^{\alpha} u_h^n \right) \chi_h \,\mathrm d\bm x + A[u_h^n, \chi_h] = \int_\Omega F^n \, \chi_h \,\mathrm d\bm x.
\]

Finally, the initial conditions have be projected onto the finite element space to ensure compatibility. We utilize the Ritz projection $\Pi_h: H^1_0(\Omega) \to S_h$ (see \cite{CiaBook}), defined uniquely by:
\[
A[\Pi_h v, \chi_h = A[v, \chi_h], \quad \forall \chi_h \in S_h.
\]
The discretized initial data are then set as:
\begin{equation}\label{FEM:Init}
u_h^0 = \Pi_h u_0, \quad u_h^1 = \Pi_h (u_0 + \tau u_1).
\end{equation}
This choice preserves the optimal convergence rates. The resulting linear system at each time step is symmetric and positive definite, guaranteeing the existence of a unique solution $u_h^n$ that can be efficiently computed.


\section{Optimization formulation of the inverse source problem}\label{sect:inverse}

Within this section, we address the critical task of reconstructing the unknown spatial source term $q(\bm x)$ from partial interior measurement data. To this end, we employ the framework of optimal control theory, which provides a powerful and systematic methodology for solving such ill-posed inverse problems. The core idea is to reformulate the inverse problem as a minimization problem, where the goal is to find a source term that produces a solution to the forward model that best matches the observed data, while incorporating appropriate regularization to ensure the stability and well-posedness of the solution procedure.

We now present the detailed optimal control formulation for the regularized inverse source problem. We define the regularized form of the Tikhonov functional by extending it as follows:
\begin{equation}\label{ocp:e0}
\mathcal{J}(q):=\frac{1}{2}\|u(q)-u_\delta\|_{L^2(\Omega_0\times(0,T))}^2 + \frac{\beta}{2}\|q\|_{L^2(\Omega)}^2,\quad q\in L^2(\Omega).
\end{equation}
In this formulation, $u(q)$ denotes the solution to the initial-boundary value problem \eqref{main:e0} with the source term $q$, which serves as the control variable. The function $u_\delta$ represents the noisy measured displacement data available only in the subdomain $\Omega_0 \subset \Omega$ over the time interval $(0, T)$, where $\delta>0$ stands for the noisy level. The first term in the functional $\mathcal{J}(q)$ is the misfit term, which quantifies the discrepancy between the computed solution and the measured data. The second term in $\mathcal{J}(q)$ is the Tikhonov regularization term, where $\beta>0$ is the regularization parameter. This term is essential for stabilizing the inversion process by penalizing large oscillations in the solution and ensuring the continuous dependence of the solution on the data, thereby mitigating the ill-posedness inherent in the inverse problem.

Subsequently, the inverse source problem is rigorously reformulated as the following constrained optimal control problem: determine a source function $q^* \in L^2(\Omega)$ that minimizes the cost functional $\mathcal{J}(q)$ over $L^2(\Omega)$, under the constraint that the state variable $u(q)$ satisfies the governing fractional viscoelastic membrane equation \eqref{main:e0}. This formulation establishes a solid theoretical foundation for the subsequent numerical solution algorithm.

Having established the optimal control framework, we now turn to the basic theoretical question on the unique existence of the solution to the inverse source problem. To this end, we introduce the adjoint $\cA^*$ of the elliptic operator $\cA$ satisfying
\begin{equation}\label{eq-adjoint}
(\cA\varphi,\psi)=(\varphi,\cA^*\psi),\quad\forall\,\varphi\in D(\cA),\ \forall\,\psi\in D(\cA^*),
\end{equation}
where $(\,\cdot\,,\,\cdot\,)$ denotes the inner product of $L^2(\Omega)$. Then it follows immediately from direct calculation that
\[
\cA^*\psi=-\mathrm{div}(\bm A\nabla\psi+\bm b\psi)+c\psi,\quad D(\cA^*)=\{\psi\in H^2(\Omega)\mid\p_{\bm A}\psi+(\sigma+\bm b\cdot\bm\nu)\psi=0\mbox{ on }\p\Omega\}.
\]
On the temporal direction, we introduce the backward Riemann-Liouville integral operator of order $\beta>0$ as well as the backward Caputo derivative of order $\alpha\in(1,2)$ as
\begin{align*}
J_{T-}^\beta f(t) & :=\int_t^T\f{(s-t)^{\beta-1}}{\Gamma(\beta)}f(s)\,\mathrm d s,\quad f\in C(-\infty,T],\\
\mathrm d_{T-}^\alpha & :=J_{T-}^{2-\alpha}\circ\f{\mathrm d^2}{\mathrm d t^2}:C^2(-\infty,T]\longrightarrow C(-\infty,T].
\end{align*}
Now we have collected all necessary ingredients to state the main theorem of this article.

\begin{theorem}[Gradient of the regularized cost functional]\label{thm:gradient}
The Fr\'echet derivative of the Tikhonov-regularized cost functional $\mathcal{J}(q)$ defined in \eqref{ocp:e0} with respect to the source parameter $q \in L^2(\Omega)$ is given by the following explicit formula:
\begin{equation}\label{thmIP:e0}
\mathcal{J}'(q) = \int_0^T p(t)v(q)(\,\cdot\,,t)\,\mathrm dt + \beta\,q,
\end{equation}
where the adjoint state variable $v(q)$ satisfies the backward problem
\begin{equation}\label{eq-gov-v}
\begin{cases}
(\eta\,\p_t^2+\mu\,\mathrm d_{T-}^\alpha+\cA^*)v=\bm1_{\Omega_0}(u(q)-u_\delta) & \mbox{in }\Omega\times(0,T),\\
v=\p_t v=0 & \mbox{in }\Omega\times\{T\},\\
\p_{\bm A}v+(\sigma+\bm b\cdot\bm\nu)v=0 & \mbox{on }\p\Omega\times(0,T),
\end{cases}
\end{equation}
where $\bm1_{\Omega_0}$ denotes the characteristic function of $\Omega_0$.
\end{theorem}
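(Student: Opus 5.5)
Since the map $q\mapsto u(q)$ is affine, the plan is to compute the directional derivative of $\mathcal J$ directly and then move the resulting spatial–temporal pairing onto an adjoint variable by integration by parts in space and time.

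\textbf{Step 1: directional derivative.} Fix $q,h\in L^2(\Omega)$. Let $w=w(h)$ solve \eqref{main:e0} with zero initial data and source $p(t)h(\bm x)$. By linearity, $u(q+\varepsilon h)=u(q)+\varepsilon w$. Expanding the square in \eqref{ocp:e0} then gives
\[
\mathcal J(q+\varepsilon h)-\mathcal J(q)=\varepsilon\Big[\int_0^T\!\!\int_{\Omega}\bm1_{\Omega_0}(u(q)-u_\delta)\,w\,\mathrm d\bm x\,\mathrm dt+\beta(q,h)\Big]+\frac{\varepsilon^2}2\Big(\|w\|^2_{L^2(\Omega_0\times(0,T))}+\beta\|h\|^2\Big).
\]
We will use (and, if it is not available, cite) the well-posedness estimate $\|w\|_{L^2(\Omega\times(0,T))}\le C\|p\|_{L^2(0,T)}\|h\|_{L^2(\Omega)}$. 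With it, the remainder is $O(\|h\|^2)$, so $\mathcal J$ is Fréchet differentiable. Its derivative is the linear functional in the bracket, which is bounded in $h$.

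\textbf{Step 2: duality identity.} Take $v$ solving \eqref{eq-gov-v}, so that the misfit term equals $\int_0^T\!\int_\Omega (\eta\p_t^2+\mu\,\mathrm d_{T-}^\alpha+\cA^*)v\,w$. We move each operator onto $w$ in turn.

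\emph{Spatial part.} Green's formula gives \eqref{eq-adjoint}: the Robin conditions for $w$ and for $v$ (with the extra $\bm b\cdot\bm\nu$ term) cancel the boundary integrals.

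\emph{Inertial term.} Integrating by parts twice in $t$ produces boundary terms $[\eta(\p_tv\,w-v\,\p_tw)]_0^T$. These vanish because $w(0)=\p_tw(0)=0$ and $v(T)=\p_tv(T)=0$.

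\emph{Fractional term.} We need
\[
\int_0^T w\,\mathrm d_{T-}^\alpha v\,\mathrm dt=\int_0^T D_{0+}^\alpha w\,v\,\mathrm dt .
\]
Write $\mathrm d_{T-}^\alpha v=J_{T-}^{2-\alpha}\p_t^2v$ and apply the Fubini-type identity $\int_0^T f\,J_{T-}^{\gamma}g=\int_0^T J_{0+}^{\gamma}f\,g$. This turns the left side into $\int_0^T J_{0+}^{2-\alpha}w\,\p_t^2v$. Two further integrations by parts yield $\int_0^T \p_t^2J_{0+}^{2-\alpha}w\,v=\int_0^T D_{0+}^\alpha w\,v$, plus boundary terms at $t=T$ that vanish because $v(T)=\p_tv(T)=0$. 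The boundary terms at $t=0$ involve $J_{0+}^{2-\alpha}w(0)$ and $\p_tJ_{0+}^{2-\alpha}w(0)$; both vanish since $w(0)=0$ and $w$ is sufficiently regular, e.g. $\p_tJ_{0+}^{2-\alpha}w=J_{0+}^{2-\alpha}\p_tw$ when $w(0)=0$.

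\textbf{Step 3: conclusion.} Combining Step 2 with the equation for $w$,
\[
\int_0^T\!\!\int_{\Omega}\bm1_{\Omega_0}(u(q)-u_\delta)\,w\,\mathrm d\bm x\,\mathrm dt=\int_0^T\!\!\int_\Omega p(t)h(\bm x)\,v\,\mathrm d\bm x\,\mathrm dt=\Big(\int_0^Tp\,v\,\mathrm dt,\;h\Big).
\]
The Riesz representation of the derivative from Step 1 is therefore \eqref{thmIP:e0}.

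\textbf{Main obstacle.} The delicate point is the fractional integration by parts in Step 2. It requires enough time regularity of $w$ and $v$ for the boundary terms at $t=0$ and $t=T$ to make sense and vanish; in particular, the value of $\p_tJ_{0+}^{2-\alpha}w$ at $0$ must be controlled. I would first carry out the computation for smooth data, where $w,v\in C^2([0,T];D(\cdot))$ and every boundary term is explicit. The general case would then follow by density, using the continuity of $h\mapsto w$ and of the data-to-$v$ map in $L^2$.
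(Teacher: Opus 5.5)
Your proposal is correct and follows essentially the same route as the paper: integrate by parts twice in time for the inertial term, use Green's formula to pass from $\cA$ to $\cA^*$, and treat the fractional term via the Fubini identity $\int_0^T (J_{0+}^{\gamma}f)g\,\mathrm dt=\int_0^T f\,(J_{T-}^{\gamma}g)\,\mathrm dt$, with the $t=0$ boundary terms removed using $\partial_t J_{0+}^{2-\alpha}w=J_{0+}^{2-\alpha}\partial_t w$; this is exactly the paper's fractional integration-by-parts lemma and its accompanying estimates. The only difference worth noting is that your exact quadratic expansion, which uses that $q\mapsto u(q)$ is affine, gives Fr\'echet differentiability directly, whereas the paper computes only the G\^ateaux derivative as a limit of difference quotients and then identifies it.
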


The proof of the above theorem relies on the following key lemma on the fractional integration by parts. Similar formulae were derived in \cite{L21}, whereas we still provide a verification for completeness.

\begin{lemma}[Fractional integration by parts]\label{lem-FIbP}
Let $\alpha\in(1,2)$ and $f,g\in C^2[0,T]$. Then
\[
\int_0^T(D_{0+}^\alpha f)g\,\mathrm d t=\Big[(J_{0+}^{2-\alpha}f)'g-(J_{0+}^{2-\alpha}f)g'\Big]_0^T+\int_0^T f(\mathrm d_{T-}^\alpha g)\,\mathrm d t.
\]
\end{lemma}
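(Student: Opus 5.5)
Set $F:=J_{0+}^{2-\alpha}f$, so that $D_{0+}^\alpha f=F''$ by \eqref{def:frac}. The plan is to integrate by parts twice in the classical sense and then move the fractional integral from $f$ onto $g''$ with Fubini's theorem.

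First we check that $F\in C^2[0,T]$. Write
\[
F(t)=\frac{1}{\Gamma(2-\alpha)}\int_0^t s^{1-\alpha}f(t-s)\,\mathrm d s .
\]
Since $f\in C^2[0,T]$ and $1-\alpha>-1$, differentiation under the integral sign is legitimate and gives
\[
F'(t)=\frac{t^{1-\alpha}f(0)}{\Gamma(2-\alpha)}+J_{0+}^{2-\alpha}f'(t),\qquad
F''(t)=\frac{(1-\alpha)t^{-\alpha}f(0)}{\Gamma(2-\alpha)}+\frac{t^{1-\alpha}f'(0)}{\Gamma(2-\alpha)}+J_{0+}^{2-\alpha}f''(t).
\]
So in general $F'$ and $F''$ are singular at $t=0$, and this is the main obstacle. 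When $f(0)\ne0$, the term $F''$ behaves like $t^{-\alpha}$ with $\alpha>1$, which is not integrable, and $F'(0)$ is infinite. The statement should therefore be read in one of two ways. Either the bracket at $0$ is taken as a limit together with an improper integral, or one assumes $f(0)=f'(0)=0$. The latter is exactly the situation in the application to Theorem~\ref{thm:gradient}, since the sensitivity $\delta u$ has zero initial data. I would handle the general case by integrating over $[\varepsilon,T]$ and letting $\varepsilon\to0^+$, keeping track of the boundary terms at $\varepsilon$. Alternatively, I would state the lemma under $f(0)=f'(0)=0$, where $F\in C^2[0,T]$ and $F(0)=F'(0)=0$.

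With this settled, two classical integrations by parts on $[\varepsilon,T]$ give
\[
\int_\varepsilon^T F''g\,\mathrm dt=\big[F'g-Fg'\big]_\varepsilon^T+\int_\varepsilon^T Fg''\,\mathrm dt .
\]
As $\varepsilon\to0^+$, the last integral converges to $\int_0^T Fg''\,\mathrm dt$ because $F$ is continuous: it is $O(t^{2-\alpha})$ near $0$. We then apply Fubini's theorem to the absolutely integrable kernel $(t-s)^{1-\alpha}$ over the triangle $0<s<t<T$:
\[
\int_0^T (J_{0+}^{2-\alpha}f)\,g''\,\mathrm dt=\int_0^T f(s)\int_s^T\frac{(t-s)^{1-\alpha}}{\Gamma(2-\alpha)}g''(t)\,\mathrm dt\,\mathrm ds=\int_0^T f\,(J_{T-}^{2-\alpha}g'')\,\mathrm ds .
\]
The right-hand side is $\int_0^T f\,(\mathrm d_{T-}^\alpha g)\,\mathrm ds$ by the definition of the backward Caputo derivative.

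It remains to identify the boundary bracket with $\big[(J_{0+}^{2-\alpha}f)'g-(J_{0+}^{2-\alpha}f)g'\big]_0^T$. At $t=T$ there is nothing to do. At $t=0$, the values $F(0)=0$ and, under the vanishing assumption, $F'(0)=0$ follow directly from the formulas above. In the general case the bracket at $0$ is understood as the limit as $\varepsilon\to0^+$, which is consistent with how the formula is stated.
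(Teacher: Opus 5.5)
Your argument follows the paper's proof: two classical integrations by parts, then the duality $\int_0^T(J_{0+}^{2-\alpha}f)g''\,\mathrm dt=\int_0^T f\,(J_{T-}^{2-\alpha}g'')\,\mathrm dt$, which you verify directly by Fubini where the paper quotes it from \cite[Lemma 4.1]{JLLY17}. Your care at $t=0$ is justified and is something the paper's proof passes over, since for $f(0)\neq0$ one has $D_{0+}^\alpha f\sim t^{-\alpha}$, which is not integrable; however, the limit reading of the bracket does not rescue the general case either, because $F'(\varepsilon)g(\varepsilon)\sim f(0)g(0)\varepsilon^{1-\alpha}/\Gamma(2-\alpha)$ diverges, so the correct version is the restricted one, for which $f(0)=0$ alone already suffices ($F'$ is then continuous with $F'(0)=0$ and $F''\in L^1(0,T)$), and this is exactly the setting of $w(\widetilde q)$ in Theorem~\ref{thm:gradient}.
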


\begin{proof}
By the definition of $D_{0+}^\alpha$, we employ the usual integration by parts to calculate
\begin{align*}
\int_0^T(D_{0+}^\alpha f)g\,\mathrm d t & =\int_0^T(J_{0+}^{2-\alpha}f)''g\,\mathrm d t=\Big[(J_{0+}^{2-\alpha}f)'g\Big]_0^T-\int_0^T(J_{0+}^{2-\alpha}f)'g'\,\mathrm d t\\
& =\Big[(J_{0+}^{2-\alpha}f)'g-(J_{0+}^{2-\alpha}f)g'\Big]_0^T+\int_0^T(J_{0+}^{2-\alpha}f)g''\,\mathrm d t.
\end{align*}
For the last term above, we recall the following formula relating the forward and backward Riemann-Liouville integral operators (see \cite[Lemma 4.1]{JLLY17}):
\[
\int_0^T(J_{0+}^\beta f)g\,\mathrm d t=\int_0^T f(J_{T-}^\beta g)\,\mathrm d t,\quad\beta>0,\ f,g\in C[0,T].
\]
Then we obtain
\[
\int_0^T(J_{0+}^{2-\alpha}f)g''\,\mathrm d t=\int_0^T f(J_{T-}^{2-\alpha}g'')\,\mathrm d t=\int_0^T f(\mathrm d_{T-}^\alpha g)\,\mathrm d t
\]
by the definition of $\mathrm d_{T-}^\alpha$, which completes the proof.
\end{proof}

Now we can proceed to the proof of Theorem \ref{thm:gradient}.

\begin{proof}[Proof of Theorem $\ref{thm:gradient}$]
First we calculate the G\^ateaux derivative of $\mathcal{J}$ at $q\in L^2(\Omega)$ in the direction $\widetilde q\in L^2(\Omega)$. To this end, we pick a small $\varepsilon>0$ to calculate
\begin{equation}\label{eq-Gateaux}
\f{\mathcal J(q+\varepsilon\widetilde q)-\mathcal J(q)}\varepsilon=\int_0^T\!\!\!\int_{\Omega_0}w(\widetilde q)\left(\f{u(q+\varepsilon\widetilde q)+u(q)}2-u_\delta\right)\mathrm d\bm x\mathrm d t+\beta\int_\Omega\widetilde q\left(q+\f\varepsilon2\widetilde q\right)\mathrm d\bm x,
\end{equation}
where we put
\[
w(\widetilde q):=\f{u(q+\varepsilon\widetilde q)-u(q)}\varepsilon.
\]
Taking difference between the initial-boundary value problems of $u(q+\varepsilon\widetilde q)$ and $u(q)$, we see that $w(\widetilde q)$ satisfies
\begin{equation}\label{eq-gov-w}
\begin{cases}
(\eta\,\p_t^2+\mu\,D_{0+}^\alpha+\cA)w=p\,\widetilde q & \mbox{in }\Omega\times(0,T),\\
w=\p_t w=0 & \mbox{in }\Omega\times\{0\},\\
\p_{\bm A}w+\sigma w=0 & \mbox{on }\p\Omega\times(0,T).
\end{cases}
\end{equation}
Meanwhile, we have $u(q+\varepsilon\widetilde q)\longrightarrow u(q)$ in $L^2(\Omega_0\times(0,T))$ as $\varepsilon\to0$ by the continuity of the solution to \eqref{main:e0} with respect to the source term. Then we pass $\varepsilon\to0$ in \eqref{eq-Gateaux} to deduce the G\^ateaux derivative as
\begin{equation}\label{eq-Gateaux1}
\mathcal J'(q)\widetilde q=\int_0^T\!\!\!\int_{\Omega_0}w(\widetilde q)(u(q)-u_\delta)\,\mathrm d\bm x\mathrm d t+\beta\int_\Omega q\,\widetilde q\,\mathrm d\bm x.
\end{equation}

To further acquire the Fr\'echet derivative $\mathcal J'(p)$, we recall the backward problem \eqref{eq-gov-v} and rewrite the first term on the right-hand side of \eqref{eq-Gateaux1} as
\begin{equation}\label{eq-Gateaux2}
\int_0^T\!\!\!\int_{\Omega_0}w(\widetilde q)(u(q)-u_\delta)\,\mathrm d\bm x\mathrm d t=\int_0^T\!\!\!\int_\Omega w(\widetilde q)\bm1_{\Omega_0}(u(q)-u_\delta)\,\mathrm d\bm x\mathrm d t=I_1+I_2+I_3,
\end{equation}
where
\begin{gather*}
I_1:=\int_0^T\!\!\!\int_\Omega\eta\,w(\widetilde q)\,\p_t^2v(q)\,\mathrm d\bm x\mathrm d t,\quad I_2:=\int_0^T\!\!\!\int_\Omega\mu\,w(\widetilde q)\,\mathrm d_{T-}^\alpha v(q)\,\mathrm d\bm x\mathrm d t,\\
I_3:=\int_0^T\!\!\!\int_\Omega w(\widetilde q)\,\cA^* v(q)\,\mathrm d\bm x\mathrm d t.
\end{gather*}
For $I_1$, by the usual integration by parts, we employ the initial condition of $w(\widetilde q)$ in \eqref{eq-gov-w} and the final condition of $v(q)$ in \eqref{eq-gov-v} to calculate
\begin{align}
I_1 & =\int_\Omega\eta\int_0^T w(\widetilde q)\,\p_t^2v(q)\,\mathrm d t\mathrm d\bm x\nonumber\\
& =\int_\Omega\eta\left\{\Big[w(\widetilde q)\,\p_t v(q)-v(q)\,\p_t w(\widetilde q)\Big]_0^T+\int_0^T v(q)\,\p_t^2w(\widetilde q)\,\mathrm d t\right\}\mathrm d\bm x\nonumber\\
& =\int_0^T\!\!\!\int_\Omega\eta\,v(q)\,\p_t^2w(\widetilde q)\,\mathrm d\bm x\mathrm d t.\label{eq-I1}
\end{align}
Similarly, we utilize \eqref{eq-adjoint} to treat $I_3$ as
\begin{equation}\label{eq-I3}
I_3=\int_0^T\left(w(\widetilde q),\cA^*v(q)\right)\mathrm d t=\int_0^T\left(\cA w(\widetilde q),v(q)\right)\mathrm d t=\int_0^T\!\!\!\int_\Omega v(q)\,\cA w(\widetilde q)\,\mathrm d\bm x\mathrm d t.
\end{equation}
Finally, for $I_2$, we take advantage of Lemma \ref{lem-FIbP} to deduce
\begin{align}
I_2 & =\int_\Omega\mu\int_0^T w(\widetilde q)\,\mathrm d_{T-}^\alpha v(q)\,\mathrm d t\mathrm d\bm x\nonumber\\
& =\int_\Omega\mu\left\{\Big[\left(J_{0+}^{2-\alpha}w(\widetilde q)\right)\p_t v(q)-v(q)\left(\p_t J_{0+}^{2-\alpha}w(\widetilde q)\right)\Big]_0^T+\int_0^T v(q)\,D_{0+}^\alpha w(\widetilde q)\,\mathrm d t\right\}\mathrm d\bm x.\label{eq-I20}
\end{align}
For the boundary term of $J_{0+}^{2-\alpha}w(\widetilde q)$ at $t=0$, we interpret it in the sense of limit and estimate
\begin{align*}
\left\|J_{0+}^{2-\alpha}w(\widetilde q)\right\|_{L^2(\Omega)} & \le\int_0^t\f{(t-s)^{1-\alpha}}{\Gamma(2-\alpha)}\|w(\widetilde q)(\,\cdot\,,s)\|_{L^2(\Omega)}\,\mathrm d s\\
& \le\sup_{0<s<t}\|w(\widetilde q)(\,\cdot\,,s)\|_{L^2(\Omega)}\f{t^{2-\alpha}}{\Gamma(3-\alpha)}\longrightarrow0\quad(t\to0).
\end{align*}
For $\p_t J_{0+}^{2-\alpha}w(\widetilde q)$, it follows from $w(\widetilde q)=0$ in $\Omega\times\{0\}$ that
\begin{align*}
\p_t J_{0+}^{2-\alpha}w(\widetilde q) & =\p_t\left(\int_0^t\f{s^{1-\alpha}}{\Gamma(2-\alpha)}w(\widetilde q)(\,\cdot\,,t-s)\,\mathrm d s\right)\\
& =\f{t^{1-\alpha}}{\Gamma(2-\alpha)}w(\widetilde q)(\,\cdot\,,0)+\int_0^t\f{s^{1-\alpha}}{\Gamma(2-\alpha)}\p_t w(\widetilde q)(\,\cdot\,,t-s)\,\mathrm d s\\
& =\int_0^t\f{(t-s)^{1-\alpha}}{\Gamma(2-\alpha)}\p_s w(\widetilde q)(\,\cdot\,,s)\,\mathrm d s.
\end{align*}
Hence, the same argument as above indicates
\[
\left\|\p_t J_{0+}^{2-\alpha}w(\widetilde q)\right\|_{L^2(\Omega)}\le\sup_{0<s<t}\|\p_s w(\widetilde q)(\,\cdot\,,s)\|_{L^2(\Omega)}\f{t^{2-\alpha}}{\Gamma(3-\alpha)}\longrightarrow0\quad(t\to0).
\]
Together with the final condition of $v(q)$, it turns out that all the boundary terms in \eqref{eq-I20} vanish and thus
\begin{equation}\label{eq-I2}
I_2=\int_0^T\!\!\!\int_\Omega\mu\,v(q)\,D_{0+}^\alpha w(\widetilde q)\,\mathrm d\bm x\mathrm d t.
\end{equation}
Substituting \eqref{eq-I1}, \eqref{eq-I2} and \eqref{eq-I3} into \eqref{eq-Gateaux2} and then into \eqref{eq-Gateaux1}, we recall the governing equation in \eqref{eq-gov-w} to conclude
\begin{align*}
\mathcal J'(q)\widetilde q & =\int_0^T\!\!\!\int_\Omega v(q)\,(\eta\,\p_t^2+\mu\,D_{0+}^\alpha+\cA)w(\widetilde q)\,\mathrm d\bm x\mathrm d t+\beta\int_\Omega q\,\widetilde q\,\mathrm d\bm x\\
& =\int_0^T\!\!\!\int_\Omega v(q)\,p\,\widetilde q\,\mathrm d\bm x\mathrm d t+\beta\int_\Omega q\,\widetilde q\,\mathrm d\bm x=\left(\int_0^T p(t)v(q)(\,\cdot\,,t)\,\mathrm d t+\beta\,q,\widetilde q\right).
\end{align*}
Since $\widetilde q\in L^2(\Omega)$ was chosen arbitrarily, we arrive at the desired result by means of the variational principle.
\end{proof}


\subsection{Finite element conjugate gradient method}

This subsection presents the complete numerical framework for reconstructing the spatial source term $q(\bm x)$ from partial interior measurements. Building upon the theoretical foundation established in Theorem \ref{thm:gradient}, we now develop a computationally efficient conjugate gradient algorithm that leverages the finite element discretization scheme introduced in Subsection \ref{sect:num}.

The gradient computation derived in Theorem \ref{thm:gradient} reveals a fundamental insight: the evaluation of $\mathcal{J}'(q)$ requires the solution of two coupled time-dependent partial differential equations, that is, the forward problem \eqref{main:e0} governing the viscoelastic membrane dynamics, and the adjoint problem \eqref{eq-gov-v} propagating the measurement misfit backward in time. Crucially, these two systems share identical mathematical structure, differing only in their temporal orientation and source terms. This structural symmetry permits the application of our previously developed finite element methodology to both equation systems, ensuring numerical consistency and computational efficiency.

The reconstruction algorithm employs an iterative optimization approach that successively refines the source term estimate by following descent directions in the cost functional landscape. Algorithm \ref{alg_inv} provides the complete computational procedure, which combines the finite element spatial discretization with an optimized conjugate gradient scheme for parameter space exploration.

\begin{algorithm}[htbp]
\caption{Finite Element Conjugate Gradient Method for Problem \ref{prob-ISP}}
\begin{algorithmic}[1]
\REQUIRE Measurement data $u_\delta(x,y,t)$ on $\overline\Omega_0 \times [0,T]$, initial guess $q^0 \in L^2(\Omega)$, regularization parameter $\beta > 0$, convergence tolerance $\epsilon > 0$
\ENSURE Reconstructed source term $q^*$ minimizing $\mathcal{J}(q)$

\STATE \textbf{Initialization:} Set iteration counter $k \leftarrow 0$, initial solution $q^0 \leftarrow q_{\text{init}}$
\STATE Compute initial cost $\mathcal{J}(q^0)$ by solving the forward problem \eqref{main:e0}

\WHILE{$k \leq K_{\text{max}}$}
\STATE \textbf{Step 1: Forward Problem Solution}
\STATE Solve the viscoelastic membrane equation \eqref{main:e0} with the source term $q^k$ to obtain state variable $u^k(\bm x,t)$
\STATE Employ the finite element discretization from Subsection \ref{sect:num} with the numerical scheme \eqref{Temp:e0}--\eqref{disc:frac1}

\STATE \textbf{Step 2: Adjoint Problem Solution}
\STATE Solve the backward problem \eqref{eq-gov-v} using the measurement residual $u^k - u_\delta$
\STATE \textbf{Step 3: Gradient Computation}
\STATE Evaluate the Fr\'echet derivative via Theorem \ref{thm:gradient}:
\STATE $\mathcal{J}'(q^k) \leftarrow \int_0^T p(t)v^k(\,\cdot\,,t)\,\mathrm d t + \beta q^k$

\STATE \textbf{Step 4: Convergence Check}
\IF{$\|\mathcal{J}'(q^k)\|_{L^2(\Omega)} < \epsilon$}
\STATE \textbf{break} \COMMENT{Optimality condition satisfied}
\ENDIF

\STATE \textbf{Step 5: Conjugate Direction Update}
\IF{$k = 0$}
\STATE $d^k \leftarrow -\mathcal{J}'(q^k)$ \COMMENT{Steepest descent initial step}
\ELSE
\STATE Compute Polak-Ribi\`ere parameter:
\STATE $\zeta^k \leftarrow \frac{\langle \mathcal{J}'(q^k), \mathcal{J}'(q^k) - \mathcal{J}'(q^{k-1}) \rangle}{\|\mathcal{J}'(q^{k-1})\|_{L^2(\Omega)}^2}$
\STATE Update descent direction: $d^k \leftarrow -\mathcal{J}'(q^k) + \zeta^k d^{k-1}$
\ENDIF

\STATE \textbf{Step 6: Optimal Step Size Selection}
\STATE Determine $\lambda^k > 0$ via Armijo line search condition:
\STATE $\mathcal{J}(q^k + \lambda^k d^k) \leq \mathcal{J}(q^k) + c_1 \lambda^k \langle \mathcal{J}'(q^k), d^k \rangle$
\STATE with backtracking to ensure sufficient decrease

\STATE \textbf{Step 7: Solution Update}
\STATE $q^{k+1} \leftarrow q^k + \lambda^k d^k$

\STATE \textbf{Step 8: Iteration Advance}
\STATE $k \leftarrow k + 1$
\ENDWHILE

\STATE \textbf{Return:} Optimal solution $q^* \leftarrow q^k$
\end{algorithmic}
\label{alg_inv}
\end{algorithm}

The algorithm exhibits several noteworthy features that ensure its effectiveness for the inverse problem under consideration:

\begin{itemize}
\item \textbf{Theoretical consistency:} The gradient computation rigorously follows the variational analysis presented in Theorem \ref{thm:gradient}, guaranteeing mathematical correctness.

\item \textbf{Numerical stability:} The identical treatment of forward and adjoint problems ensures that discretization errors remain consistent, preventing numerical artifacts in the reconstruction process.

\item \textbf{Adaptive regularization:} The Tikhonov term $\beta q$ naturally incorporated in the gradient provides inherent regularization, stabilizing the inversion against noise in the measurement data.
\end{itemize}

The algorithm terminates when the norm of the gradient falls below a specified tolerance $\epsilon$, indicating satisfaction of the first-order optimality conditions to within numerical precision. This criterion ensures that the reconstructed source term $q^*$ represents a genuine stationary point of the regularized cost functional, providing a mathematically sound solution to the inverse problem.


\section{Numerical validation and performance analysis}\label{sect:ex}

This section presents a comprehensive numerical investigation to validate the efficacy, accuracy, and robustness of the proposed finite element conjugate gradient method for solving the inverse source problem. Through systematically designed computational experiments, we assess the algorithm's performance under various challenging conditions, including measurement noise, limited observation domains, and different fractional orders. The numerical results demonstrate the method's practical applicability and provide insights into its convergence behavior and reconstruction capabilities.

\subsection{Robustness analysis under noisy measurements}\label{subsect:ex2}

To evaluate the algorithm's performance under realistic conditions, we consider measurement data contaminated with additive noise. The noisy observations are modeled as:
\begin{equation}\label{noise_model}
\begin{split}
u_\delta(\bm x,t) = u(\bm x,t) + \delta\,\mathrm{rand}(\bm x,t), \quad (\bm x,t)\in\Omega_0\times[0,T],
\end{split}
\end{equation}
where $\mathrm{rand}(\bm x,t)$ denotes uniformly distributed random noise on $\overline\Omega_0 \times [0,T]$. The reconstruction accuracy is quantified using the relative $L^2$ error:
\[
\mathcal{E}_{\mathrm{rel}} := \frac{\| q_{\mathrm{true}} - q_{\mathrm{rec}} \|_{L^2(\Omega)}}{\| q_{\mathrm{true}} \|_{L^2(\Omega)}},
\]
where $q_{\mathrm{true}} $ denotes the true solution and the reconstructed solution is $q_{\mathrm{rec}}$. Throughout this subsection, we fix the computational domain as $\Omega=(0,1)^2$, and simply set the coefficients in \eqref{main:e0} as
\[
\rho=\mu\equiv1,\quad\sigma\equiv0,\quad\cA=-\triangle=-(\p_x^2+\p_y^2).
\]


\subsubsection*{Experimental setup 1: baseline performance assessment}

We first establish a baseline configuration to evaluate the fundamental performance of our method. The temporal component of the source is specified as $p(t) = 2 + (2\pi t)^2$, while the true spatial source term is given by:
\begin{equation}\label{q_true}
q_{\text{true}}(x,y) = \frac{1}{2}\sin(\pi x)\cos(\pi y) + 1.
\end{equation}

The forward and adjoint problems are discretized using a $20\times20\times20$ mesh in space-time. The observable subdomain is set as $\Omega_0 = \Omega \setminus [0.05,0.95]^2$, with the final time $T=1.5$ and the noise level $\delta=1\%$.

Figure \ref{figure:3} presents the reconstruction results under these conditions. The top row compares the reconstructed solution with the true source term, demonstrating excellent visual agreement. The bottom row shows the absolute error distribution and the convergence history of the cost functional. Notably, the algorithm achieves a rapid convergence to $10^{-4}$ tolerance within a few iterations, highlighting its computational efficiency.

\begin{figure}[!htbp]
\includegraphics[width=\textwidth]{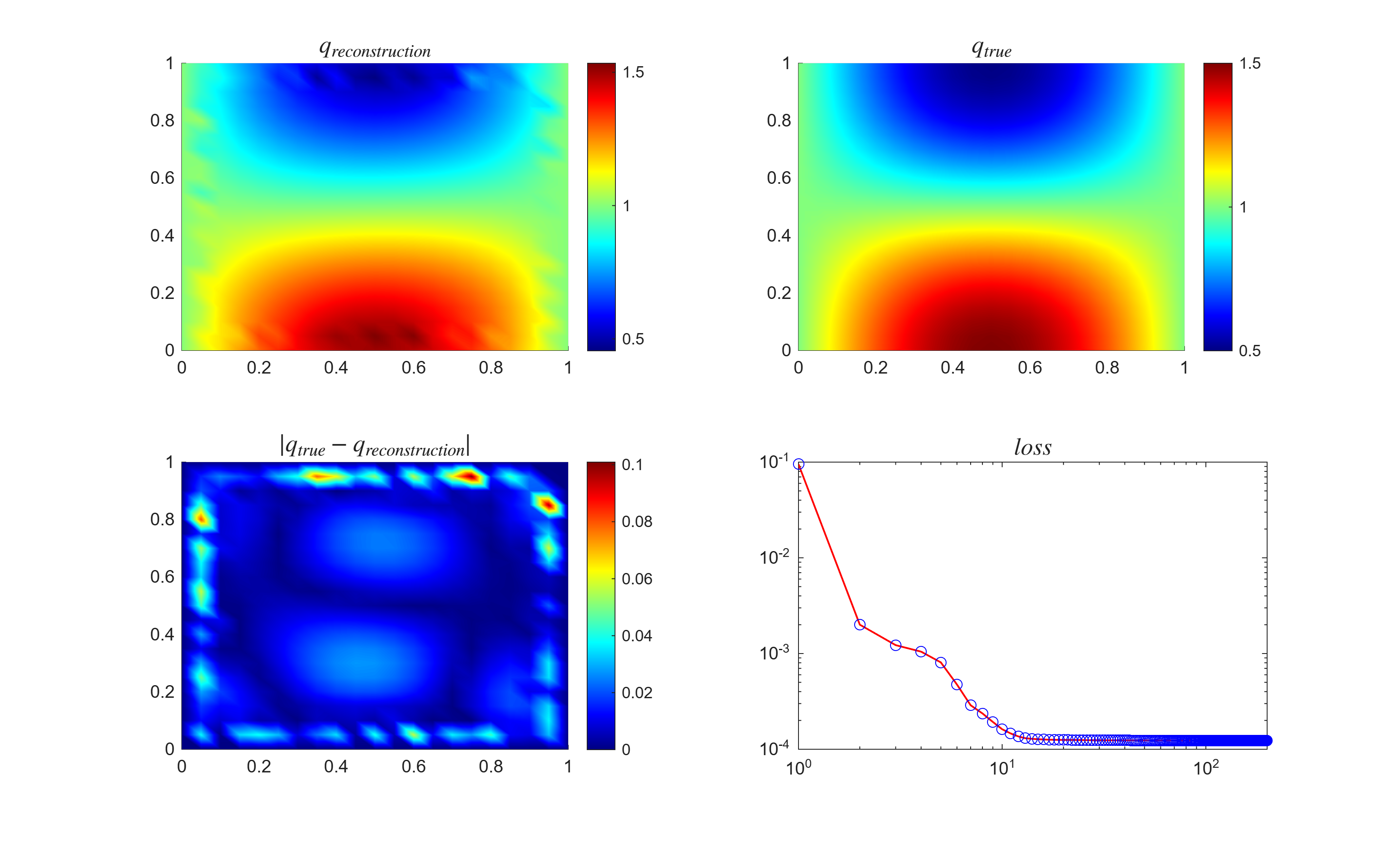}
\caption{Reconstruction results for $\alpha=1.5$ with $\Omega_0 = \Omega\setminus[0.05,0.95]^2$, $T=1.5$, and $\delta=1\%$ noise. Top: reconstructed (left) and true (right) solutions. Bottom: absolute error distribution (left) and the corresponding loss with respect to iterations (right).}\label{figure:3}
\end{figure}

To systematically evaluate robustness, we conduct parameter sensitivity studies summarized in Tables \ref{table:1} and \ref{table:2}. Table \ref{table:1} investigates the impact of noise levels and observation domain sizes, while Table \ref{table:2} examines the influence of fractional order variations.

\begin{table}[!htbp]\centering
\caption{Reconstruction accuracy under varying noise levels and observation domains ($\gamma=1.5$).}
\begin{tabular}{cccc}
\hline
Noise Level $\epsilon$ (\%) & Observation Domain $\Omega_0$   & Relative Error $\mathcal{E}_{\text{rel}}$ & Final Cost $\mathcal{J}$ \\
\hline
$1$ & $\Omega\setminus[0.1,0.9]^2$ & $2.45\times10^{-2}$ & $1.28\times10^{-4}$\\
$3$ & $\Omega\setminus[0.1,0.9]^2$ & $6.44\times10^{-2}$ & $1.89\times10^{-4}$\\
$5$ & $\Omega\setminus[0.1,0.9]^2$ & $9.05\times10^{-2}$ & $3.20\times10^{-4}$\\
\hline
$1$ & $\Omega\setminus[0.2,0.8]^2$ & $2.65\times10^{-2}$ & $1.30\times10^{-4}$\\
$1$ & $\Omega\setminus[0.1,0.9]^2$ & $2.45\times10^{-2}$ & $1.27\times10^{-4}$\\
$1$ & $\Omega\setminus[0.05,0.95]^2$ & $2.89\times10^{-2}$ & $1.25\times10^{-4}$\\
\hline
\end{tabular}\label{table:1}
\end{table}

\begin{table}[!htbp]\centering
\caption{Performance under different fractional orders ($\epsilon=2\%$, $\Omega_0=\Omega\setminus[0.05,0.95]^2$).}
\begin{tabular}{cccc}
\hline
Fractional Order $\gamma$ & Observation Domain $\Omega_0$ & Relative Error $\mathcal{E}_{\text{rel}}$ & Final Cost $\mathcal{J}$\\
\hline
$1.3$ & $\Omega\setminus[0.05,0.95]^2$ & $4.08\times10^{-2}$ & $1.41\times10^{-4}$\\
$1.6$ & $\Omega\setminus[0.05,0.95]^2$ & $3.99\times10^{-2}$ & $1.40\times10^{-4}$\\
$1.9$ & $\Omega\setminus[0.05,0.95]^2$ & $3.63\times10^{-2}$ & $1.40\times10^{-4}$\\
\hline
\end{tabular}\label{table:2}
\end{table}

The results demonstrate that our method maintains satisfactory reconstruction accuracy even under substantial noise contamination and limited observation data, confirming its robustness for practical applications.


\subsubsection*{Experimental setup 2: functional form sensitivity analysis}

We further investigate the algorithm's performance across different functional forms of the source term to assess its general applicability. Three distinct true solutions are considered:
\begin{equation}\label{diff_q}
\begin{split}
q_{\text{true}}^{(1)}(x,y) & = 3 - \exp\left(1 - \frac{x+y}{2}\right); \\
q_{\text{true}}^{(2)}(x,y) & = \frac{1}{2}\cos(\pi x)\cos(2\pi y) + 1; \\
q_{\text{true}}^{(3)}(x,y) & = \frac{1}{2}\cos(\pi x)\cos(\pi y) + 1.
\end{split}
\end{equation}

Figure \ref{figure:4} displays the reconstruction results for these test cases, while Table \ref{table:3} provides quantitative error metrics. The algorithm successfully recovers all three functional forms with consistent accuracy, demonstrating its versatility in handling diverse source term characteristics.

\begin{figure}[!htbp]
\includegraphics[width=\textwidth,height=.3\textwidth]{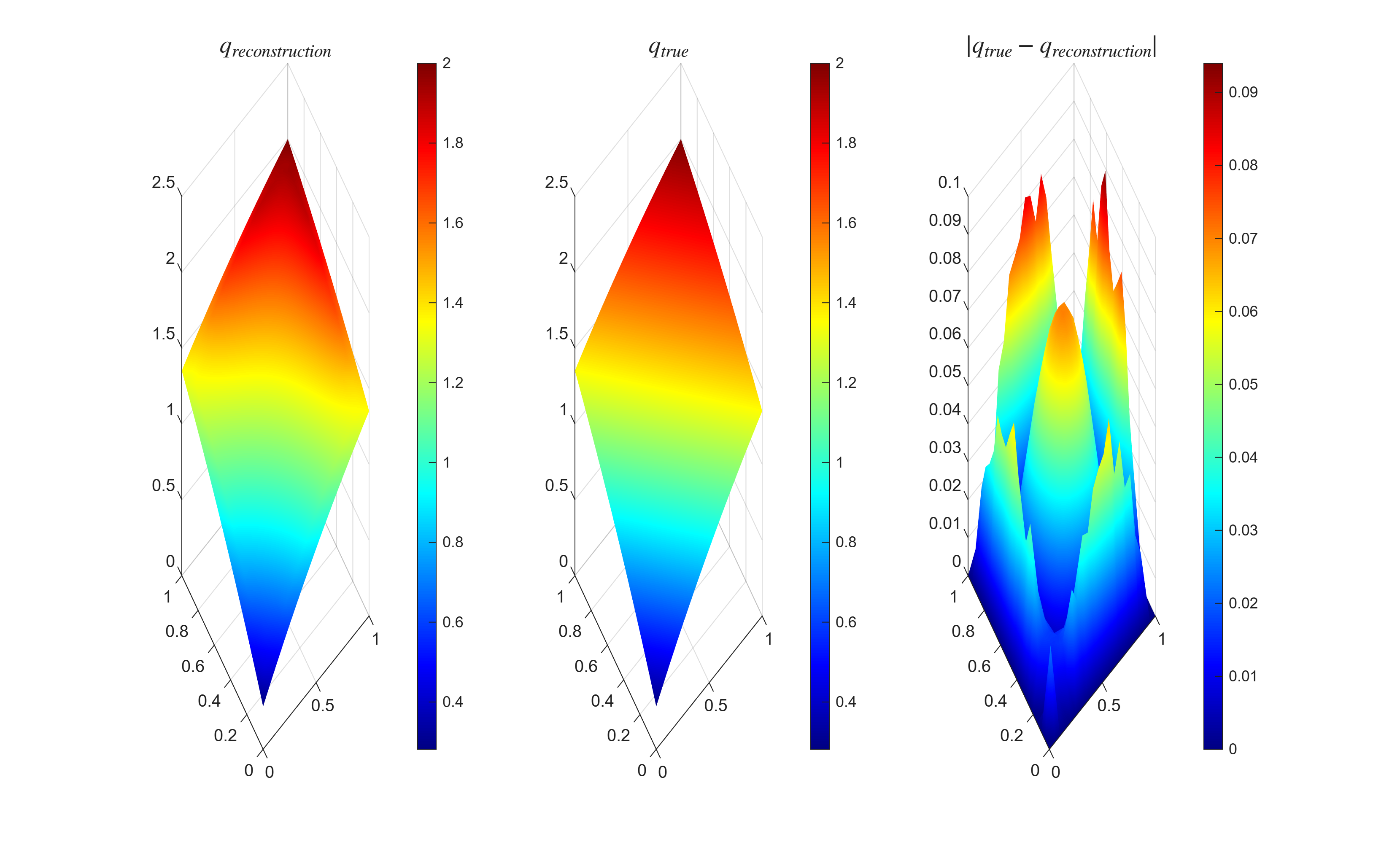}
\includegraphics[width=\textwidth,height=.3\textwidth]{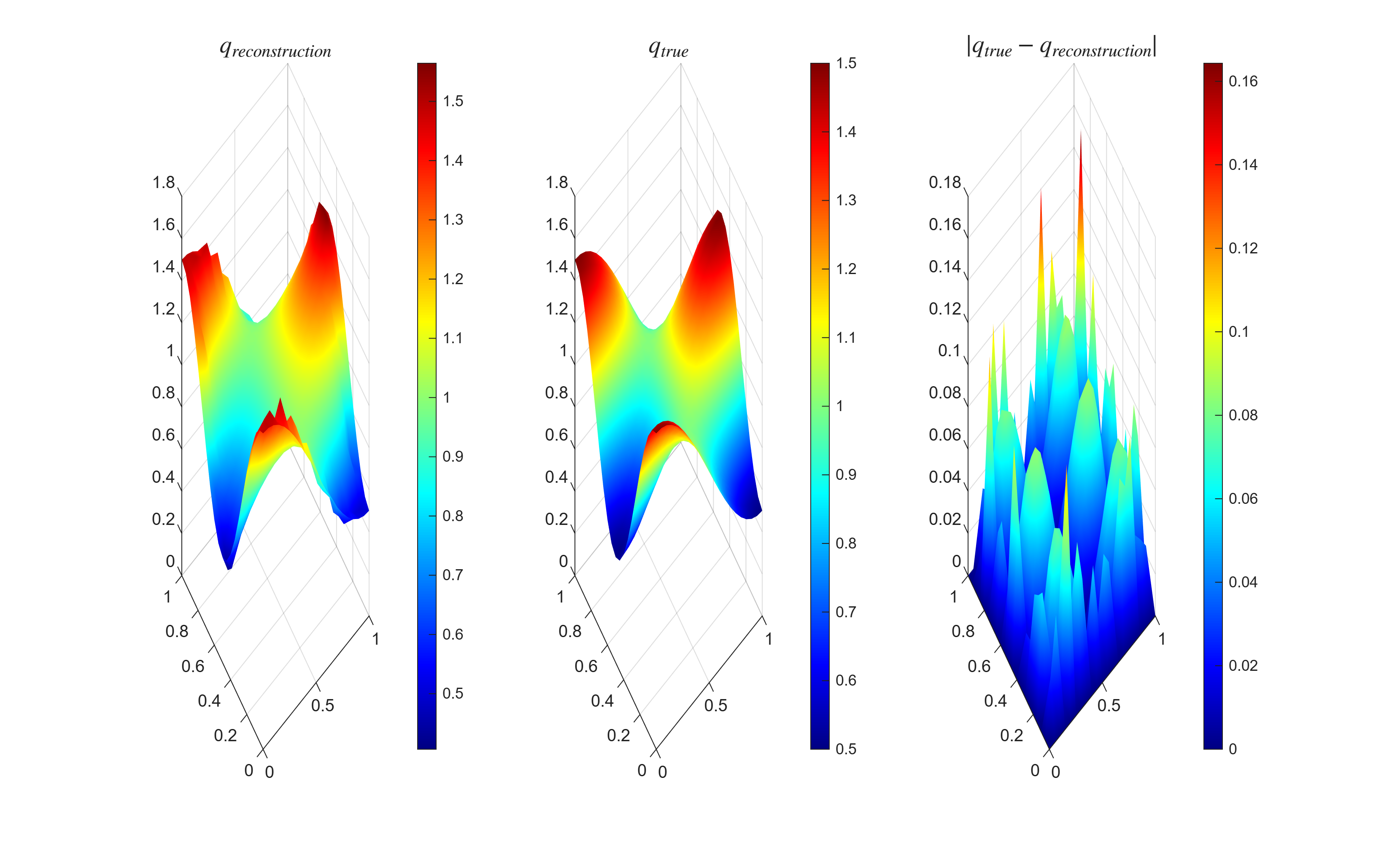}
\includegraphics[width=\textwidth,height=.3\textwidth]{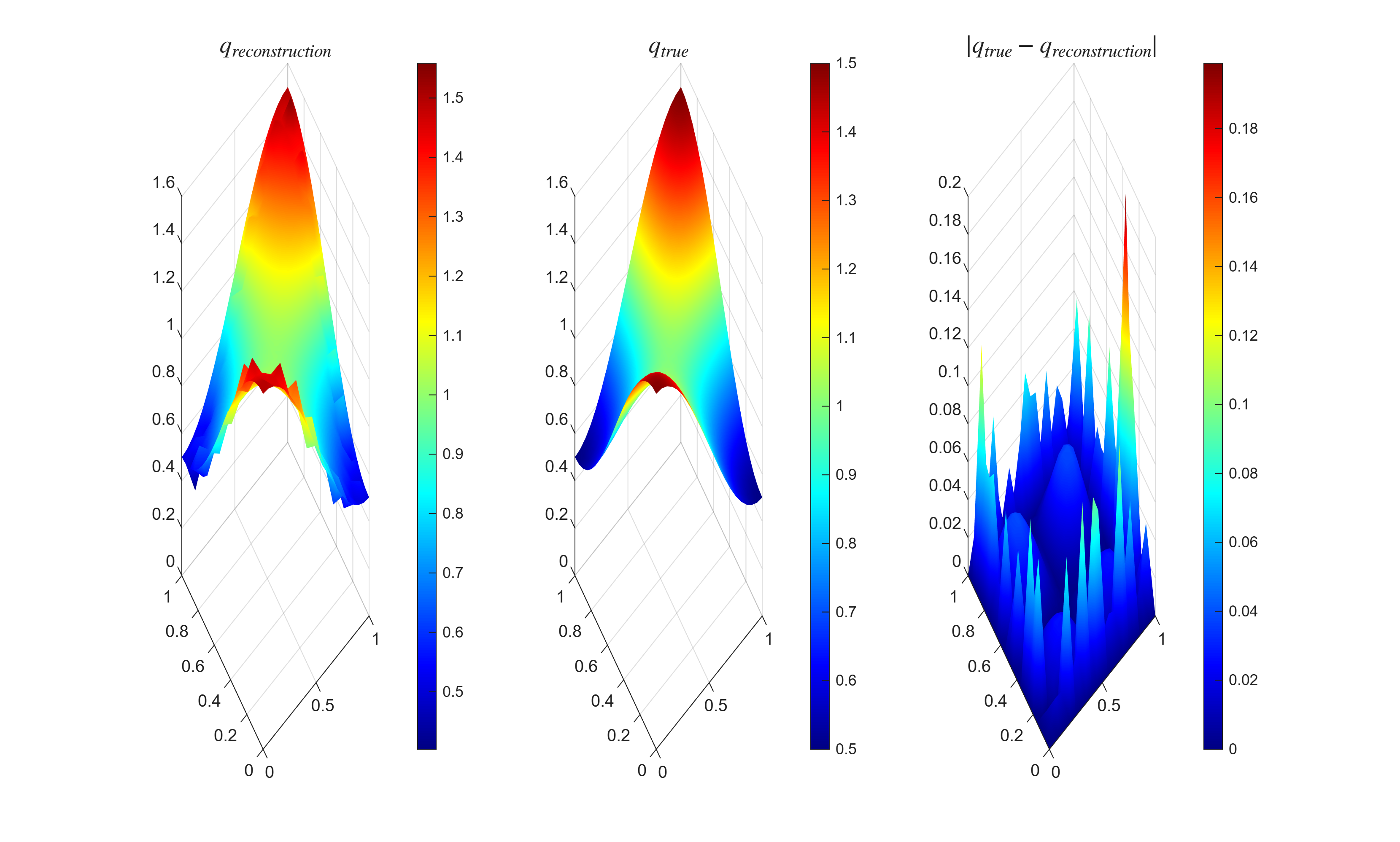}
\caption{Reconstruction results for three different source term functional forms ($\alpha=1.5$, $\Omega_0=\Omega\setminus[0.05,0.95]^2$, $\delta=1\%$). Each row shows reconstructed (left) and true (right) solutions for cases (1)--(3) respectively.}\label{figure:4}
\end{figure}

\begin{table}[!htbp]\centering
\caption{Performance across different source term functional forms ($\delta=1\%$, $\Omega_0=\Omega\setminus[0.05,0.95]^2$).}
\begin{tabular}{ccc}
\hline
Source Term $q_{\text{true}}(x,y)$ & Relative Error $\mathcal{E}_{\text{rel}}$ & Final Cost $\mathcal{J}$\\
\hline
$3-\exp(1-(x+y)/2)$ & $2.43\times10^{-2}$ & $2.83\times10^{-4}$\\
$\frac{1}{2}\cos(\pi x)\cos(2\pi y)+1$ & $4.12\times10^{-2}$ & $1.23\times10^{-4}$\\
$\frac{1}{2}\cos(\pi x)\cos(\pi y)+1$ & $2.75\times10^{-2}$ & $1.24\times10^{-4}$\\
\hline
\end{tabular}\label{table:3}
\end{table}


\subsection{Discussion of numerical findings}

The comprehensive numerical experiments reveal several key insights:

\begin{itemize}
\item \textbf{Noise robustness}: The method exhibits remarkable stability against measurement noise, with graceful degradation of reconstruction accuracy as noise levels increase.

\item \textbf{Partial data capability}: Satisfactory reconstructions are achieved even with limited observation domains, demonstrating the method's applicability to practical scenarios with restricted sensor coverage.

\item \textbf{Fractional order independence}: The algorithm's performance remains consistent across different fractional orders, confirming its robustness for various viscoelastic material models.

\item \textbf{Functional form versatility}: Successful reconstruction of diverse source term profiles highlights the method's general applicability beyond specific functional assumptions.
\end{itemize}

These results collectively establish the proposed method as a reliable and effective computational tool for solving inverse source problems in fractional viscoelastic membranes.


\section{Concluding remarks}\label{sect:end}

This paper has presented a comprehensive numerical framework for solving inverse source problems in viscoelastic membranes modeled by fractional derivatives. The core of our work establishes that the Riemann-Liouville fractional derivative provides the most physically consistent representation of viscoelastic damping effects, capturing the inherent memory and hereditary properties of real materials.

We have demonstrated that the Riemann-Liouville operator $D_{0+}^\alpha$ naturally arises from first principles when modeling viscoelastic damping, as it properly accounts for the entire deformation history of the material. This contrasts with the Caputo derivative, which requires artificial initial conditions that lack clear physical interpretation for intermediate fractional orders.


\subsection*{Future research directions}

Looking forward, several promising extensions of this work present exciting opportunities for both theoretical and applied research:

\textbf{Variable-order Fractional Calculus:}
A natural extension is to consider variable-order fractional derivatives where $\alpha = \alpha(\bm x,t)$, allowing the viscoelastic properties to vary spatially and temporally. This would enable modeling of heterogeneous materials and aging effects in viscoelastic structures.

\textbf{Multi-Scale Modeling Framework:}
Developing a multi-scale approach that couples macroscopic membrane dynamics with microscopic polymer chain interactions could provide deeper insights into the fundamental origins of fractional viscoelastic behavior. Such a framework might reveal universal scaling laws governing material response across different length scales.

\textbf{Data-Driven Fractional Modeling:}
Integrating machine learning techniques with our physics-based approach could enable automatic discovery of optimal fractional orders from experimental data, potentially revealing new relationships between material composition and fractional dynamics.

\textbf{Experimental Validation and Applications:}
The methodology developed here provides a powerful tool for practical applications including:
\begin{itemize}
\item Non-destructive testing of polymer composites and biological tissues
\item Structural health monitoring of viscoelastic damping systems
\item Material characterization through inverse analysis of dynamic responses
\end{itemize}

\section*{Acknowledgments}
The first author is supported by the National Natural Science Foundation of China (No. 12401555), the Natural Science Foundation of Shandong Province of China (No. ZR2025LZN010) and  the Taishan Scholars Program of Shandong Province (No. tsqn202507039). The second author is supported by JSPS KAKENHI Grant Numbers JP22K13954, JP23KK0049 and Guangdong Basic and Applied Basic Research Foundation (No. 2025A1515012248). 


\section*{AUTHOR DECLARATIONS}

\section*{Conflict of Interest}

No potential conflict of interest was reported by the authors.

\section*{Data availability statement}

All data generated or analyzed during this study are included in this article.


\end{document}